\newif\ifArXiV
\tikzset{%
	false-child/.style = {-stealth,dashed},
	true-child/.style = {-stealth,solid},
	c/.style = {draw,solid,minimum width=2em,
		minimum height=1.em},
	r/.style = {rectangle,draw,solid,minimum width=2em,fill=white,
		minimum height=1em},
	l/.style = {rectangle,minimum width=0.5em,fill=white,
		minimum height=1.5em},     
	p/.style = {rectangle,minimum width=0.5em,fill=white,
		minimum height=0.5em},
}
\newcommand{\IA}{IA\xspace}
\newcommand{\OA}{OA\xspace}
\newcommand{\MOLP}{MOLP\xspace}
\newcommand{\MOILP}{MOILP\xspace}
\newcommand{\MOMILP}{MOMILP\xspace}
\newcommand{\eghull}{\mathcal Q^+}
\newtheorem{theorem}{Theorem}
\newtheorem{definition}[theorem]{Definition}
\newtheorem{lemma}[theorem]{Lemma}
\newtheorem{corollary}[theorem]{Corollary}
\newtheorem{remark}[theorem]{Remark}
\newcommand{\qedhere}{\hfill$\square$}
\begin{document}

\title{An outer approximation algorithm for multi-objective mixed-integer linear and non-linear programming}
 			\ifArXiV
			\author[1]{Fritz Bökler\thanks{fritz.boekler@uos.de}}
			\author[2]{Sophie N. Parragh\thanks{sophie.parragh@jku.at}}
			\author[2]{Markus Sinnl\thanks{markus.sinnl@jku.at}}
			\author[3]{Fabien Tricoire\thanks{fabien.tricoire@wu.ac.at}}

			\affil[1]{Institute of Computer Science, Osnabrück University, Osnabrück, Germany}
			\affil[2]{Institute of Production and Logistics Management/JKU Business School, 
			Johannes Kepler University Linz, Linz, Austria}
			\affil[3]{Institute for Transport and Logistics Management,Vienna University of Economics and Business, Vienna, Austria}			
			\date{}
			\maketitle
			
			\else
			
\titlerunning{OA for \MOMILP}

 \authorrunning{Bökler et al.} 
\author{Fritz Bökler, Sophie N. Parragh, Markus Sinnl$^{(\text{\Letter})}$, Fabien Tricoire \thanks{This research was funded in whole, or in part, by the Austrian Science Fund (FWF) [P 31366, 35160-N]. The research was also supported by the Linz Institute of Technology (Project LIT-2019-7-YOU-211) and the JKU Business School.
For the purpose of open access, the author has applied a CC BY public copyright licence to any Author Accepted Manuscript version arising from this submission.}}

\institute{Fritz Bökler\at Institute of Computer Science, Osnabrück University, Osnabrück, Germany 
		 	\and
		 	Sophie N. Parragh\at Institute of Production and Logistics Management/JKU Business School, Johannes Kepler University Linz, Linz, Austria 
		 		\and 
		 	\text{\Letter} Markus Sinnl\at Institute of Production and Logistics Management/JKU Business School, Johannes Kepler University Linz, Linz, Austria 
		 	\\ \email{markus.sinnl@jku.at}
		 	\and
		 	Fabien Tricoire \at Institute for Transport and Logistics Management,Vienna University of Economics and Business, Vienna, Austria\\ 
		 	} 

\date{Received: date / Accepted: date}
\fi
\maketitle


\begin{abstract}
	
In this paper, we present the 
first outer approximation algorithm for multi-objective mixed-integer linear programming problems with any number of objectives. The algorithm also works for certain classes of non-linear programming problems. It produces the non-dominated extreme points as well as the facets of the convex hull of these points. The algorithm relies on an oracle which solves single-objective weighted-sum problems and we show that 
the required number of oracle calls is polynomial in the number of facets of the convex hull of the non-dominated extreme points in the case of multiobjective mixed-integer programming (MOMILP). Thus, for MOMILP problems for which the weighted-sum problem is solvable in polynomial time, the facets can be computed with incremental-polynomial delay. From a practical perspective, the algorithm starts from a valid lower bound set for the non-dominated extreme points and iteratively improves it. Therefore it can be used in multi-objective branch-and-bound algorithms and still provide a valid bound set at any stage, even if interrupted before converging. Moreover, the oracle produces Pareto optimal solutions, which makes the algorithm also attractive from the primal side in a multi-objective branch-and-bound context. Finally, the oracle can also be called with any relaxation of the primal problem, and the obtained points and facets still provide a valid lower bound set. A computational study on a set of benchmark instances from the literature and new non-linear multi-objective instances is provided.%

\keywords{multi-objective optimization, outer approximation, mixed-integer programming, non-linear programming}

\end{abstract}

\newcommand{\PolySCIP}{\texttt{PolySCIP}\xspace}
\newcommand{\bensolve}{\texttt{bensolve}\xspace}


%
\section{Introduction and motivation}\label{intro} 

Many practical problems involve several, often conflicting objectives, such as profitability or cost versus environmental concerns~\citep{demir2014bi,ramos2014planning,eskandarpour2021multi} or customer satisfaction \citep{braekers2015bi}. This means that there is, in the general case, no single optimal solution which optimizes all objectives simultaneously but a set of trade-off solutions which are better than all other solutions but incomparable among each other. A solution belonging to this set is called \emph{Pareto optimal} and has the property that no objective function value can be improved without deteriorating another. The images of these solutions in the space of objective function values (the \emph{objective space} or \emph{criterion space}) are called \emph{non-dominated} points and they together form the \emph{non-dominated frontier} or \emph{Pareto frontier}. In multi-objective optimization, we usually aim at identifying at least one Pareto optimal solution for each non-dominated point.


Our research aims to further advance general purpose exact methods for bi- and multi-objective (mixed) integer linear programming relying on the branch-and-bound (B\&B) idea. All recent successful implementations rely on \emph{lower bound sets} \citep{ehrgott2007bound}, see, e.g., \citet{gadegaard2019bi,parragh2019branch,forget2022warm,adelgren2021branch} but they are currently either restricted to two objectives or they only address the pure integer and not the mixed integer case. Lower bound sets are sets in the objective space. Depending on the definition, they either contain the non-dominated frontier or a set of points which, together, dominate the non-dominated frontier. They are a natural multi-objective extension of the lower bounds obtained by, e.g., relaxations in single-objective optimization.
In a bi-objective context, they are usually computed "from the inside" with so-called \emph{inner approximation} (\IA) schemes. Only \citet{Forget2020,forget2022warm} use an \emph{outer approximation} (\OA) scheme to compute bound sets "from the outside". The bound sets in these two works are obtained from the \emph{linear programming} (LP) relaxation of the \MOMILP using 
an implementation of Benson's \OA algorithm for \emph{multi-objective linear programming} (\MOLP). 

The \OA algorithm we are proposing can directly obtain such a lower bound set for a \MOMILP and also some classes of multi-objective (mixed) integer non-linear programming problems without the need of a convex relaxation of the considered problem. The algorithm produces the \emph{non-dominated extreme points} and the \emph{facets of the convex hull} of these points for any number of objectives.

The study of efficient algorithms for the computation of extreme points and facets of multi-objective optimization problems is also of independent theoretical interest.
It is well known that even for structurally simple MOLP problems, the number of extreme points can be exponential in the size of the input \citep{R88}.
The same observation is also true for many well-known multi-objective combinatorial optimization (MOCO) problems, e.g., the multi-objective versions of the assignment and shortest path problems. Consequently, a polynomial time algorithm for the problem classes our algorithm can tackle cannot exist.
In recent years, a new complexity measure emerged that is capable of separating some of the complexities of these problem classes.
\cite{bokler18} shows that in an \emph{output-sensitive complexity measure}, MOLP is an easy problem, while the more general \MOMILP remains hard.
The idea is to regard the running time not only as a function of the input, but also of the output size.
This is a very useful measure for multi-objective optimization if the whole set of extreme points, facets, or the whole non-dominated set is sought.
A central theorem in this field of research is that the extreme points of a MOCO problem can be computed efficiently, if the weighted-sum scalarization can be computed efficiently, i.e., in polynomial time \citep{bokler2015output}.
The proof of this theorem roots in the study of MOLP, in which the efficient computation of non-dominated extreme points is possible.
While there are efficient algorithms for the computation of non-dominated facets of an MOLP \citep{bokler18}, no such theorem could yet be proven for the facet computation of more general problem classes. Our work closes this research gap.

\subsection{Problem definition \label{sec:problemdef}}

We are interested in general multi-objective optimization problems (MOP) of the following form and require further assumptions below:
\begin{align*}
\min&\ f(x)\\
\text{s.t. }& x \in \mathcal X,
\end{align*}
where $f\colon \mathcal X \to \mathbb R ^ p$, $p\in \mathbb N$, and $\mathcal X$ is a set.
For this problem we define $\mathcal Q^+ \coloneqq \text{cl}\,\text{conv} f(\mathcal X) + \mathbb R^p_{\geq 0}$, the \emph{Edgeworth-Pareto hull}.
We observe that $\mathcal Q^+$ is a closed convex polyhedron.
In our problem setting, we are interested in the facets of $\mathcal Q^+$.
We make the following assumptions about the problem:
\begin{enumerate}
    \item The value $\inf \left\{w^\mathsf T f(x) : x \in \mathcal X\right\}$ is computable for every $w\in \mathbb Q^p_{\geq 0}$ and has finite value. \label{aspts:computable}
    \item $\mathcal Q^+$ is finitely generated. \label{aspts:finite}
\end{enumerate}

From its definition and (\ref{aspts:computable}) it follows that $\mathcal Q^+$ is a rational polyhedron, $\text{rec}\, \mathcal Q^+ = \mathbb R^p_{\geq 0}$, and there is $y\in \mathbb R^p$ with $\mathcal Q^+ \subseteq y + \mathbb R^p_{\geq 0}$, i.e., an ideal point exists.

The assumptions do not generalize, i.e., there are MOPs that do not conform to these.
Examples include multi-objective linear programming instances without an ideal point (conflict with (\ref{aspts:computable})), but also continuous non-linear programming instances where $\mathcal Q^+$ exhibits curvature (conflict with (\ref{aspts:finite})).

On the other hand, the assumptions include \MOMILP iff an ideal point is present, and also non-linear problems, e.g., multi-objective combinatorial problems with quadratic constraints and objective functions.
Furthermore, non-convex problems can be solved in which curvature is present in the non-dominated set but not in $\mathcal Q^+$.

\subsection{Contribution}

In this work, we propose the first \OA algorithm which computes the extreme points and facets of $\mathcal Q^+$ for the problem defined in Section \ref{sec:problemdef}. Our algorithm is motivated by the \OA approach of \citet{benson1998outer} for \MOLP. It uses an \emph{oracle} which consists of solving single-objective \emph{weighted-sum problems}. We show that our algorithm needs a number of oracle calls which is polynomial in the number of facets of $\mathcal Q^+$ for a large class of problems.

As a consequence for \MOMILP, we are able to propose a similar theorem to the one by \cite{bokler2015output}: If the weighted-sum problem of a given \MOMILP is solvable in polynomial time, the facets can be computed with incremental-polynomial delay.
These results extend and complement the results of \cite{bokler2015output}, who showed that with an \IA\ algorithm, the extreme points can be found with incremental-polynomial delay in this case.

From a practical perspective, next to providing a lower bound set at any point of its execution, each oracle call may also produce a new Pareto optimal solution. This makes its use within multi-objective B\&B attractive also from the primal side. Finally, the algorithm also provides a valid lower bound set when the oracle is called with any relaxation of the original problem, such as the LP-relaxation (potentially augmented with valid inequalities) in case of \MOMILP. This opens perspectives for multi-objective branch-and-cut algorithms.

We observe that the problem we study also corresponds to the problem addressed in parametric integer linear programming with parametrization in the objective function \citep{geoffrion1977exceptional}, which is an interesting problem on its own. Moreover, a related problem is to compute the extreme points and/or facets of the convex hull of a linear projection of a mixed-integer problem onto a small subspace. Our algorithm is also capable of computing this projection. However, our focus in this paper will be on the optimization side.

\subsection{Outline}
In Section \ref{sec:notation} we provide notation, definitions and other preliminaries needed in the remainder of the paper. Section~\ref{sec:state-of-the-art} provides an overview of existing inner and outer approximation schemes. Section \ref{sec:oa} provides a general outline of \OA\  algorithms for multi-objective optimization. To work for a concrete problem class such as \MOLP\ or \MOMILP, a point separation oracle specific to the problem class is needed. In Section \ref{sec:oracles} we propose two generic separation oracles which only depend on the availability of a black-box algorithm for the single-objective problem of the considered problem class. The theoretical runtime of the algorithm when using the presented oracles is discussed in Section \ref{sec:run-time}. Section \ref{sec:comp} contains a computational study on instances from literature and also new non-linear instances, and also discusses implementation details. We give empirical results on the numerical accuracy of our approach and provide a comparison with \PolySCIP, which is an \IA solver for \MOMILP. Finally, Section \ref{sec:concl} concludes the paper.

\section{Notation, definitions and preliminaries \label{sec:notation}}

\subsection{General definitions} 

For $x,y\in \mathbb R^n$, the relations $x=y$, $x\leq y$, and $x<y$ apply component-wise.
We define $\mathbb R^n_{\geq 0} \coloneqq \{x \in \mathbb R^n : x \geq 0\}$, the non-negative orthant.
Let $A\in\mathbb Q^{m\times n}$, $C\in\mathbb Q^{p\times n}$, $b\in\mathbb Q^m$ and $n=n_1+n_2$. 
If $\mathcal X = \{x \in \mathbb Z^{n_1} \times \mathbb R^{n_2} : A x \geq b \}$ and $f(x) = Cx$, then the MOP is called \MOMILP.
If $n_1 = 0$, the problem is called a \MOLP and for $n_2 = 0$, it is called a \emph{multi-objective integer linear programming} problem (\MOILP).

We call the set $\mathcal Q \coloneqq f(\mathcal X)$ the \emph{feasible set in decision space}.
A point $y \in \mathcal Q$ is \emph{non-dominated}, iff there is no $\hat y\in\mathcal Q$ with $\hat y \leq y$ and $y\neq  \hat y$.
A feasible solution $x\in \mathcal X$ is called \emph{Pareto-optimal}, iff $f(x)$ is non-dominated.
A feasible solution $x$ is a \emph{supported Pareto-optimal solution}, iff there is a $w\in\mathbb R^p$ with $w>0$, such that $x$ is an optimal solution to the \emph{weighted-sum problem} or \emph{weighted-sum scalarization}:
 \begin{equation}
 (\text{WSUM}) \quad \min\left\{{w}^\mathsf T f(x): x \in \mathcal X\right\}
\label{eq:wsum} \notag 
\end{equation}
For every Pareto-optimal $x$, we call $f(x)$ a \emph{supported non-dominated} point.
Note that compared to \MOLP, in \MOMILP\ there can be \emph {non-supported} non-dominated points. The existence of such points justifies the need for multi-objective B\&B algorithms, with which they can be found.
This also justifies the fact that in MOLP the Edgeworth-Pareto hull usually has a different name:
It is called the \emph{upper image} of the MOLP instance.

\subsection{Polyhedra and faces} 

Let $P = \{Ax \geq b\}$ for $A \in \mathbb R^{m\times n}, b \in \mathbb R^m, n, m\in \mathbb N$ be a polyhedron.
Given $a\in \mathbb R^n$ and $\alpha \in \mathbb R$, the set $H^a_\alpha \coloneqq \{a^\mathsf T x = \alpha\}$ is called a \emph{hyperplane}.
The hyperplane $H^a_\alpha$ is \emph{supporting $P$ in $F$}, iff $F\coloneqq P \cap H^a_\alpha \neq \emptyset$ and for all $x\in P$, we have $a^\mathsf T x \geq \alpha$.
For every not necessarily supporting hyperplane $H^a_\alpha$, the set $F \coloneqq P \cap H^a_\alpha$ is called a \emph{face} of $P$.
A face with affine dimension $0$ is called an \emph{extreme point}, one with affine dimension $\dim P - 1$ is called a \emph{facet}.
Since for fully dimensional polyhedra, the hyperplane supporting a polyhedron in a facet is uniquely determined up to scaling, we often identify a facet with its defining (in)equality.

A vector $r\in \mathbb R^n$ is called a \emph{ray} of $P$, iff for every $x\in P$ and $\gamma > 0$ the point $x + \gamma r\in P$.
The set of all rays of $P$ is called its \emph{recession cone} or $\text{rec}\,P$.
If $\text{rec}\,P=\emptyset$, then we call $P$ a \emph{polytope}.
For the Edgeworth-Pareto hulls we observe in our model, we always have $\text{rec}\,\mathcal Q^+= \mathbb R^p_{\geq 0}$.

\subsection{Supported and extreme points} 
Supported non-dominated points can also be characterized by the Edgeworth-Pareto hull:
A point $y\in \mathcal Q$ is a supported non-dominated point iff it lies on the boundary of $\mathcal Q^+$.
We call the extreme points of $\mathcal Q^+$ the \emph{non-dominated extreme points} of the problem as well as the facets of $\mathcal Q^+$ the \emph{non-dominated facets} of the problem.
In our problem setting, we are interested in the facets of $\mathcal Q^+$ as they provide us with a lower bound set; 
as well as in Pareto-optimal solutions $x$ such that $f(x)$ is a non-dominated extreme point, as they provide us with primal solutions.
The polyhedron is illustrated in Figure \ref{fig:polytope} and the non-dominated extreme points are the red points with the black circles around them. 

\begin{figure}[tb]
\begin{center}
\begin{tikzpicture}[
scale=4.2,
axis/.style={very thick, ->, >=stealth'},
every node/.style={color=black}
]

\draw[fill=blue!50,draw=blue!50] (0.3,0.7) -- (0.2,0.9) -- (0.2,1.1)--(1.1,1.1)--(1.1,0.2)--(0.7,0.2)-- (0.5,0.4) --cycle;

\draw[draw=orange,fill=orange!50,thick] (0.2,0.9)  -- (0.7,0.6) -- (0.8,0.5) -- (0.7,0.2)  -- (0.5,0.4) -- (0.3,0.7) -- cycle;

\draw[step=.1cm,gray!50,very thin] (-0.05,-0.05) grid (1.05,1.05);
\draw[axis] (-0.1,0)  -- (1.1,0) node(xline)[right]
{$y_1$};
\draw[axis] (0,-0.1) -- (0,1.1) node(yline)[above] {$y_2$};

\draw[blue,thick](0.2,1.1)--(0.2,0.9)--(0.3,0.7)--(0.5,0.4) -- (0.7,0.2)  --(1.1,0.2) ;



\draw[fill=red,thick] (0.3,0.7) circle (0.5pt);
\draw[fill=red,thick] (0.2,0.9) circle (0.5pt);

\fill[red] (0.4,0.6) circle (0.5pt);
\fill[red] (0.4,0.7) circle (0.5pt);

\draw[fill=red,thick] (0.5,0.4) circle (0.5pt);
\fill[red] (0.5,0.5) circle (0.5pt);
\fill[red] (0.5,0.6) circle (0.5pt);

\fill[red] (0.6,0.4) circle (0.5pt);
\fill[red] (0.6,0.6) circle (0.5pt);

\fill[red] (0.7,0.4) circle (0.5pt);
\fill[red] (0.7,0.6) circle (0.5pt);

\draw[fill=red,thick] (0.7,0.2) circle (0.5pt);
\fill[red] (0.7,0.3) circle (0.5pt);
\fill[red] (0.8,0.5) circle (0.5pt);

\end{tikzpicture}
\end{center}
\caption{Exemplary illustration of the polytope $\textcolor{blue}{\mathcal Q^+}=\textcolor{orange}{\text{cl}\,\text{conv}}(\textcolor{red}{ {\mathcal Q}})+\mathbb R^2_\geq$ \label{fig:polytope}}
\end{figure}


For more background on multi-objective optimization in general, we refer to, e.g., \cite{chinchuluun2007survey,ehrgott2005multicriteria}.

\subsection{Running times and output-sensitive complexity}

Since the number of facets and extreme points of the Edgeworth-Pareto hull (also called \emph{parametric complexity}) of many problems can grow exponentially, the traditional aim of a polynomial running time for algorithms computing this entities is not useful.
Instead, we usually regard an algorithm as efficient, if its running time can be bounded by a polynomial in the input and the output size.
We call such an algorithm an \emph{output-polynomial time} algorithm.

A more restrictive notion is an \emph{incremental-polynomial delay} algorithm:
The $k$th-delay of an enumeration algorithm is defined as the time between producing the $k$th and $(k+1)$th output, including the time to produce the first output and the time after the last output until termination.
In the incremental-polynomial delay model, we require the $k$th delay to be bounded by a polynomial in the input size and $k$.
In contrast to output-polynomial time algorithms, incremental-polynomial delay algorithms are not allowed to print all outputs at the end, but at certain time points there have to be updates.
However, the model still allows the delays between the outputs to grow depending on the size of the output so far.

This is remedied by the most restrictive model we discuss:
A \emph{polynomial delay} algorithm is an enumeration algorithm where every delay is bounded by a polynomial in the input size.

Naturally, a polynomial delay algorithm is an incremental-polynomial delay algorithm and an incremental-polynomial delay algorithm is also an output-polynomial algorithm but the reverse implications do not hold in general. See also \cite{BEMM16} for a more in-depth exposition of the subject.



\section{State-of-the-art}
\label{sec:state-of-the-art}

In this section, we provide an overview on existing \IA and \OA algorithms for multi-objective optimization problems.
A focus is on the state-of-the-art concerning finding the Edgeworth-Pareto hull of an \MOMILP as the best achievable linear lower bound set for an \MOMILP is the boundary of $\eghull$. At the end of the section, we also discuss the existing research on bound computation for multi-objective (mixed) integer non-linear programming.

Since $\eghull$ is a polyhedron, there are two general ways to represent it: By its generators, i.e., extreme points and rays, or by inequalities.
In the first representation, it suffices to compute the extreme points, since as we assume the feasible set to be bounded, the extreme rays of $\eghull$ are always the unit vectors.
In a multi-objective or parametric optimization setting, the extreme point representation is more compelling, since it provides us with value vectors of efficient solutions.
In our setting however, we are more interested in computing inequality representations.
In bi-objective problems, there is not much of a difference between these representations, but with a higher number of objectives, the computational cost of switching between them grows exponentially.


To the best of our knowledge, the first works that were concerned with computing extreme points of $\eghull$ of MOMILP were independently conducted by \cite{aneja1979bicriteria,cohon2004multiobjective}, and \cite{dial1979}.
Albeit, this algorithm, that is usually referenced as the \emph{dichotomic approach} today, does only work in the bi-objective case.
As the approach is well-known, we forgo describing it in detail.
Although the approach has been formulated for transportation problems, it is indeed a general purpose tool.
Much later, \cite{BEMM16} prove that the dichotomic approach can be implemented with polynomial delay.
Moreover, in the special case of MOLP, this can even be achieved with polynomial space in the input size using a slightly different algorithm also reported therein.

In the seminal work of \citet{benson1998outer}, the first outer approximation algorithm for solving \MOLP problems in objective space is given. Despite its name, it is an exact method. It forms the basis of the open source vector linear programming solver \bensolve \citep{lohne2017vector} and motivates our work. The key idea of Benson's algorithm is that $\mathcal Q^+$ can be approximated from the "outside", by adding new facets in each step, until the approximation $S$ is equal to $\mathcal Q^+$. Initially, the approximation $S = y + \mathbb R^p_{\geq 0}$, where $y$ is the ideal point, and an interior point $\hat{r} \in \text{int} \mathcal Q^+$ are required. Then, in each step a point that lies on the intersection of the boundary of $\mathcal Q^+$ and the line connecting one of the current vertices $y^*$ of $S$ and $\hat{r}$ is determined. It is used to obtain a supporting hyperplane of $\mathcal Q^+$, by solving an LP, or to prove that $y^*$ is a vertex of $\mathcal Q^+$.
\cite{bokler18} proves that this algorithm is capable of computing the extreme points of an MOLP in output-polynomial time and the facets even with incremental-polynomial delay.

Extending the work by Benson, \cite{ehrgott2012dual} develop a dual algorithm based on the geometric duality theory of MOLP by \cite{heyde2008geometric}.
Instead of operating in the decision or objective space, it operates in a dual space of the objective space.
Thus the dual algorithm essentially is a version of Benson's outer approximation algorithm with a very crucial caveat:
\cite{bokler2015output} show that instead of solving an involved LP to find separating hyperplanes, in this representation, it suffices to solve weighted sum LPs.
In spite of its dual description, the dual algorithm can be described as operating in the objective space in the following way:
The initial dual outer approximation is essentially a non-dominated extreme point of the problem with an attached non-negative orthant in the objective space, $\mathbb R^p_{\geq 0}$.
Due to duality theory, choosing an extreme point of the current dual outer approximation is the same as choosing a facet of the current inner approximation in the objective space.
Finding a new supporting hyperplane in dual space is again equivalent to finding a new supported point in the objective space.
In this view, the dual of Benson's algorithm is also the first instance of what we now call an inner approximation for MOLP.
Regarding theoretical running time, \cite{bokler18} shows that the dual algorithm computes the extreme points of an MOLP with incremental-polynomial delay and the facets in output-polynomial time.
Interestingly, the outer approximation is thus more efficient in computing facets and the inner approximation is more efficient in computing extreme points.


Very recently, \citet{csirmaz2020inner} introduced a unifying perspective on inner and outer approximation schemes for \MOLP, giving skeletal algorithms for both. Each scheme starts from an initial approximation $S$ of $\mathcal Q^+$. Then, the inner (outer) approximation scheme relies on a plane (vertex) separating oracle determining if a vertex (facet) is already part of $\mathcal Q^+$. If yes, it is marked at final, otherwise, $S$ is updated. Output to either scheme is $\mathcal Q^+$ in double description format (i.e., vertices and facets of $\mathcal Q^+$).
\citet{csirmaz2020inner} shows that Benson's algorithm and its variants follow the outer approximation scheme whereas the dual variant of \citet{ehrgott2012dual} falls into the described inner approximation scheme.

While Benson's algorithm and its variants address the multi-objective linear case, also some algorithms for obtaining the extreme supported points of 
\MOMILP have been proposed in the past years. To the best of our knowledge, none of them approximates $\mathcal Q^+$ from the outside and as such classifies as an \OA algorithm.

\citet{przybylski2010recursive} propose a recursive scheme that resorts to solving bi-objective problems via dichotomic search derived from the weights associated with extreme points of a given facet of the weight-set polytope. The weight-set is the set of all eligible weights in a weighted-sum scalarization. Decomposing it into weight-set components, for which a given extreme supported image is optimal, results in a weight-set decomposition. Computing this decomposition motivates  the development of the proposed scheme. \citet{przybylski2010recursive} show that their method works for three objectives and in theory also for more. An enumerative procedure 
is due to \citet{ozpeynirci2010exact}. It relies on what the authors call stages which are defined by $p$ points. In each step, the normal weight vector to the hyperplane $H$ defined by a given stage $R$ is used in the weighted sum scalarization of the MOMILP to obtain a new point $y$. If $y$ corresponds to one of the points of $R$, $R$ is identified as a facet defining stage. Otherwise, the new point $y$ is added to the set of non-dominated points and the list of to be explored stages is updated with $y$. A main drawback of the method is that it requires the generation of dummy points to initialize the search.

\citet{bokler2015output} propose \IA algorithms for MOCO problems. Their work builds on the dual variant of Benson's algorithm, observing that the LPs that need to be solved in the \MOLP case can be replaced by solving weighted sum problems of the original MOCO problem, showing that the geometric duality theory for MOLP by \cite{heyde2008geometric} can also be applied to non-convex problems if the interest lies in the Edgeworth-Pareto hull of the problem.
To overcome the drawback of enumerating redundant hyperplanes, they introduce a lexicographic variant which guarantees that only extreme points of $\mathcal Q^+$ are identified. First complexity results are also due to their work: \citet{bokler2015output} show that for every fixed number of objectives and MOCO with a polynomially solvable weighted-sum scalarization, the non-dominated extreme points can be computed with incremental-polynomial delay.
This result can be straightforwardly extended to \MOMILP under the assumption that an ideal point exists:
By the same arguments of the classic proof that (the decision version of) $\text{\MOMILP}\in\mathbf{NP}$, we see that the encoding lengths of optimal extreme point solutions of the integer hull remain polynomially bounded in the input size.
Optimizing over the integer hull of an \MOMILP thus is analogous to the combinatorial case.

\citet{borndorfer2016polyscip} develop a general purpose solver named \PolySCIP for MOILP and \MOLP with an arbitrary number of objectives as part of the constraint integer programming suite SCIP. To compute the extreme points of the MOILP and \MOLP, they rely on a lifted weight space polyhedron.
This polyhedron is essentially the dual polyhedron from geometric duality of MOLP applied to MOMILP, extending the work of \cite{bokler2015output}.
\PolySCIP employs a dual Benson outer approximation algorithm on this lifted weight space/dual space.
Hence as discussed above, \PolySCIP also falls in the category of inner approximations.
%
%
%
%

Even more recently, \citet{halffmann2020inner} have proposed an inner approximation algorithm for three-objective mixed integer linear programs, with the purpose of determining the weight-set decomposition. The proposed algorithm relies on solving bi-objective subproblems via the dichotomic approach to identify these components. It runs in output polynomial time.
Finally, building on the work of \citet{ozpeynirci2010exact} and \citet{przybylski2010recursive}, \citet{przybylski2019simple} have proposed a new \IA algorithm for \MOMILP. In each step, it generates the convex hull of the supported extreme points which have been identified so far. Then, a yet unexplored facet is chosen and used to define the next weighted sum scalarization to be solved. The solution may either be a new non-dominated supported point or identify the facet to be (partially)  part of $\mathcal Q^+$. The algorithm ends once all facets have been explored.




Multi-objective non-linear programming problems have received much less attention than linear problems. 
\citet{klamroth2003unbiased} develop inner and outer approximation algorithms producing piecewise linear approximations of the Pareto frontier for convex as well as nonconvex problems. 
An outer approximation algorithm for convex multi-objective programming problems with convex Pareto frontiers is due to \citet{ehrgott2011approximation}. It is an extension of Benson’s \OA algorithm and provides a set of weakly $\epsilon$-non-dominated points. Building on this work, \citet{lohne2014primal} extend Benson's outer approximation algorithm and its dual variant to convex vector optimization. 
\citet{niebling2019branch} use Benson cuts to improve the lower bound, which is initially based on the ideal point, in the context of a branch-and-bound scheme for smooth convex multi-objective optimization.  More recently, an outer approximation based B\&B algorithm has been proposed by \citet{de2020solving} for multi-objective mixed integer convex optimization problems. In the lower bound computation, building on the work of \citet{ehrgott2011approximation} and \citet{lohne2014primal}, a convex relaxation-based outer approximation is used. It relies on the computation of supporting hyperplanes. This computation is steered by the available local upper bounds within the B\&B algorithm. 



\section{The outer approximation algorithm \label{sec:oa}}
In this section, we first give a general outline of \OA\ algorithms for general multi-objective optimization problems.
To this end, we generalize the terminology introduced by \cite{csirmaz2020inner}.
The key ingredient in implementing such algorithms for concrete problem classes like \MOLP\ or \MOMILP\ is the specification and implementation of a \emph{point separating oracle} for the respective problem class. 
\begin{definition}[Point separating oracle \citep{csirmaz2020inner}]
A point separating oracle for a polytope $\mathcal Q^+ \subset \mathbb R^p$ is a black box algorithm which takes as input a point $y^* \in \mathbb R^p$ and returns as tuple $(status,H)$.
The output is as follows: i) $(inside, \emptyset)$, if $y^* \in \mathcal Q^+$, or ii) $(outside, H)$, where $H$ is a supporting hyperplane $H=\{y\in \mathbb R^p : w^T y = \alpha\}$ of $\mathcal Q^+$ such that $w^T y^* < \alpha$ and $w^T \widehat y \geq \alpha$ for each $\widehat y \in \mathcal  Q^+$ (i.e., $H$ separates $y^*$ from $\mathcal Q^+$).
\end{definition}

The \OA\ algorithm is described in Algorithm \ref{alg:outer}. 
The approximations $\mathcal S=\mathcal S_0 \supset \mathcal S_1 \supset  \ldots$ used within the algorithm are considered to be stored in a double description format, i.e., as vertices and facets.
The algorithm starts with an initial approximation $\mathcal S$.
This initial approximation consists of the \emph{ideal point} $y^I\in\mathbb Q^p$, whose $i$th component is the minimum value in the $i$th objective,
together with the 
non-negative orthant, i.e., $\mathcal S \coloneqq y^I + \mathbb R^p_\geq$. It proceeds in an iterative fashion by checking the vertices of the current approximation $\mathcal S_i$ for containment in $\mathcal Q^+$ using the separation oracle. This is done repeatedly until for some $\mathcal S_{i'}$ the oracle answers with status $inside$ for each vertex, which means $\mathcal S_{i'}=\mathcal Q^+$. 

\begin{algorithm}[h!tb]
\SetAlgoLined
\KwData{initial approximation $\mathcal S$ specified by double description (vertices and facets)}
\KwResult{$\mathcal Q^+$ specified by double description}
$\mathcal S_0=\mathcal S$\;
$insideVertices \gets \emptyset$\;
$i\gets 0$\;
\While{$\exists\ y^* \in vertices(\mathcal S_i): y^* \not \in insideVertices$}
{
$(status,H)\gets separationOracle(y^*,\mathcal S_i,\mathcal  Q^+)$\;
\eIf{$status=inside$}
{
$insideVertices \gets insideVertices \cup \{y^*\}$\;
}
{
$\mathcal S_{i+1}\gets \mathcal S_{i} \cap H$\;
$i \gets i+1$\;
}
}
$\mathcal Q^+ \gets S_i$\;
\caption{Generic \OA\ algorithm for multi-objective optimization \label{alg:outer}}
\end{algorithm}

Two steps of the \OA algorithm are illustrated in Figure~\ref{fig:outer}. It shows the image of all feasible solutions to an MOILP as green dots. Figure~\ref{subfig:Si} depicts the initial approximation $\mathcal S_0$, consisting of the ideal point $y^I$ and $\mathbb R_{\geq}^p$, in green. Then, the point separating oracle is called and the hyperplane $H$ is returned which separates $y^I$ from $\mathcal Q^+$, as shown in Figure~\ref{subfig:Si+1}. The new approximation $\mathcal S_{1}$ has two vertices depicted in blue $vertices(S_{1}) =  \{y^A, y^B\}$. An oracle call for the vertex filled with green, $y^A$, will return $inside$, since it belongs to $\mathcal Q^+$, whereas for the second vertex $y^B$, the oracle will produce a new hyperplane, separating $y^B$ from $\mathcal Q^+$.

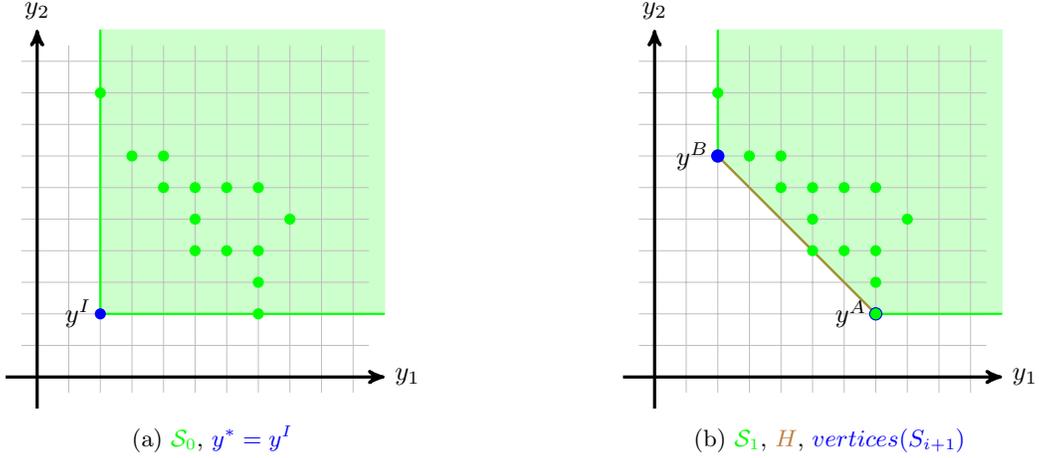
\begin{figure}[h!tb]
	\begin{center}
		\begin{subfigure}[h]{0.49\textwidth}
			\begin{center}
				\begin{tikzpicture}[
				scale=4.2,
				axis/.style={very thick, ->, >=stealth'},
				important line/.style={thick},
				dashed line/.style={dashed, thin},
				pile/.style={thick, ->, >=stealth', shorten <=2pt, shorten
					>=2pt},
				every node/.style={color=black}
				]
				
				\draw[fill=green!20,draw=green!20] (0.2,0.9) -- (0.2,1.1)--(1.1,1.1)--(1.1,0.2)--(0.7,0.2)--(0.2,0.2) --cycle;
				
				
				\draw[step=.1cm,gray!50,very thin] (-0.05,-0.05) grid (1.05,1.05);
				\draw[axis] (-0.1,0)  -- (1.1,0) node(xline)[right]
				{$y_1$};
				\draw[axis] (0,-0.1) -- (0,1.1) node(yline)[above] {$y_2$};
				
				\draw[green,thick](0.2,1.1)--(0.2,0.2)--(1.1,0.2) ;

				
				
				\fill[blue] (0.2,0.2) circle (0.5pt);
				\node[left] at (0.2,0.2) {$y^I$};
				
				\fill[green] (0.3,0.7) circle (0.5pt);
				\fill[green] (0.2,0.9) circle (0.5pt);
				
				\fill[green] (0.4,0.6) circle (0.5pt);
				\fill[green] (0.4,0.7) circle (0.5pt);
				
				\fill[green] (0.5,0.4) circle (0.5pt);
				\fill[green] (0.5,0.5) circle (0.5pt);
				\fill[green] (0.5,0.6) circle (0.5pt);
				
				\fill[green] (0.6,0.4) circle (0.5pt);
				\fill[green] (0.6,0.6) circle (0.5pt);
				
				\fill[green] (0.7,0.4) circle (0.5pt);
				\fill[green] (0.7,0.6) circle (0.5pt);
				
				\fill[green] (0.7,0.2) circle (0.5pt);
				\fill[green] (0.7,0.3) circle (0.5pt);
				
				\fill[green] (0.8,0.5) circle (0.5pt);
				
				\end{tikzpicture}
				\caption{\textcolor{green}{$\mathcal S_0$}, \textcolor{blue}{$y^* = y^I$} \label{subfig:Si}}
			\end{center}
		\end{subfigure}
		\begin{subfigure}[h]{0.49\textwidth}
			\begin{center}
				\begin{tikzpicture}[
				scale=4.2,
				axis/.style={very thick, ->, >=stealth'},
				important line/.style={thick},
				dashed line/.style={dashed, thin},
				pile/.style={thick, ->, >=stealth', shorten <=2pt, shorten
					>=2pt},
				every node/.style={color=black}
				]
				
				\draw[fill=green!20,draw=green!20](0.2,0.7) -- (0.2,0.9)-- (0.2,1.1)--(1.1,1.1)--(1.1,0.2)--(0.7,0.2)  --(0.5,0.4) --cycle;
				
				
				\draw[step=.1cm,gray!50,very thin] (-0.05,-0.05) grid (1.05,1.05);
				\draw[axis] (-0.1,0)  -- (1.1,0) node(xline)[right]
				{$y_1$};
				\draw[axis] (0,-0.1) -- (0,1.1) node(yline)[above] {$y_2$};
				
				\draw[green,thick](0.2,1.1)--(0.2,0.9) --(0.2,0.7) --(0.5,0.4)-- (0.7,0.2)  --(1.1,0.2) ;
				
				\draw[brown,thick] (0.2,0.7) -- (0.7,0.2) ;
				
				
				
				\fill[green] (0.2,0.7) circle (0.1pt);
				
				\fill[green] (0.3,0.7) circle (0.5pt);
				\fill[green] (0.2,0.9) circle (0.5pt);
				
				\fill[green] (0.4,0.6) circle (0.5pt);
				\fill[green] (0.4,0.7) circle (0.5pt);
				
				\fill[green] (0.5,0.4) circle (0.5pt);
				\fill[green] (0.5,0.5) circle (0.5pt);
				\fill[green] (0.5,0.6) circle (0.5pt);
				
				\fill[green] (0.6,0.4) circle (0.5pt);
				\fill[green] (0.6,0.6) circle (0.5pt);
				
				\fill[green] (0.7,0.4) circle (0.5pt);
				\fill[green] (0.7,0.6) circle (0.5pt);
				
				\fill[fill=blue,thick] (0.7,0.2) circle (0.6pt);
				\node[left] at (0.7,0.2) {$y^A$};
				
				\fill[green] (0.7,0.2) circle (0.5pt);
				\fill[blue] (0.2,0.7) circle (0.6pt);
				\node[left] at (0.2,0.7) {$y^B$};
				
				\fill[green] (0.7,0.3) circle (0.5pt);
				
				\fill[green] (0.8,0.5) circle (0.5pt);
				
				\end{tikzpicture}
				\caption{\textcolor{green}{$\mathcal S_{1}$}, \textcolor{brown}{$H$}, \textcolor{blue}{$vertices(S_{i+1})$}
				\label{subfig:Si+1}}
			\end{center}
		\end{subfigure}
		\caption{Two steps of the outer approximation algorithm
		\label{fig:outer}}
	\end{center}
\end{figure}

The convergence-behaviour of an \OA algorithm depends on the separation oracle.
In the original work by \cite{benson1998outer} for MOLP, the supporting hyperplanes to $\mathcal Q^+$ were only face supporting.
The first facet supporting hyperplane producing oracles were given in \cite{bokler18,csirmaz2020inner} for MOLP.
Hence, we are interested in facet supporting hyperplanes and we show in the following sections that our proposed general separation oracle produces facets of $\mathcal Q^+$. 

\begin{remark}
It is easy to see, that at any point of the algorithm, we have $\mathcal Q^+ \subseteq S_i$. Moreover, the algorithm still gives an \OA\ of $\mathcal Q^+$ when the separation oracle does not separate with respect to $\mathcal Q^+$ but to any superset $S(\mathcal Q^+) \supset \mathcal Q^+$. This means that $\mathcal Q^+$ can be replaced with any relaxation of the polyhedron $\mathcal Q^+$ in the separation oracle when the goal is to obtain lower bound sets for multi-objective B\&B algorithms.
\end{remark}

\begin{remark}
Let us briefly discuss the key running-time insights of the general OA algorithm.
Let us therefore assume, that a point separating oracle is available, produces facets only, and its running time is a constant as a simplification.
In this case it is easy to see that for every extreme point found, i.e., the situation of the first if-branch, we ask the point separating oracle once.
And for every facet found, i.e., the situation of the else-branch, we also ask the oracle once.
No facet and no extreme point is found twice.
Thus, we see that the overall running time is polynomial in the number of facets, extreme points, and the running time of the point separating oracle.
Moreover, every time we find a facet it is clear that we can output it directly.
This is not true for extreme points though, as the extreme points found in the if-branch are only intermediate extreme points and may be deleted in later iterations.
The real extreme points of the Edgeworth-Pareto hull are only know at the time the algorithm terminates.
This is the main reason why the algorithm is considered more efficient in computing facets than extreme points.
Consequently, a finite number of extreme points and facets is needed unless we want to approximate the Edgeworth-Pareto hull.
For an in-depth look into the running-times of the inner and outer approximation algorithms see \cite{bokler18}.
\end{remark}

\section{New separation oracles \label{sec:oracles}} 

\subsection{Geometric duality}

The techniques we use in this paper are heavily influenced by the theory on MOLP.
For MOLP, a duality theory \citep{heyde2008geometric} exists that is built around a polyhedron $\mathcal D$, the lower image, that is geometrically dual to $\mathcal Q^+$.
For $w\in \mathbb R^{p-1}$, we define $\lambda(w) \coloneqq \left(w_1, \dots, w_{p-1}, 1 - \sum_{i=1}^{p-1} w_i\right)\in \mathbb R^p$.
For an MOLP with $A\in\mathbb Q^{m\times n}, b\in\mathbb Q^m$, and $C\in\mathbb Q^{p\times n}$, the \emph{lower image} is defined as
\begin{multline*}
\mathcal D \coloneqq \bigg\{ \left(w_1, \dots, w_{p-1}, \alpha \right) \in \mathbb R ^ p :\\
w_i\geq 0, \sum_{i=1}^{p-1} w_i\leq 1, u^\mathsf T A = \lambda(w)^\mathsf T C, u \in \mathbb R ^m, \alpha \leq b^\mathsf T u\bigg\}.
\end{multline*}
We can characterize $\mathcal D$ independently of $A, b$, and $C$ and only with respect to the upper image polyhedron $\mathcal Q^+$ in the following way:
\[\mathcal D = \left\{ \left(w_1, \dots, w_{p-1}, \alpha \right) \in \mathbb R^p : w_i \geq 0, \sum_{i=1}^{p-1} w_i \leq 1, y \in \mathcal Q^+:\alpha \leq \lambda(w)^\mathsf T y \right\}.\]
In this fashion, we give a more general definition of the lower image for any MOP with our additional requirements:
\begin{definition}
    Let $\mathcal Q^+$ be the Edgeworth-Pareto hull of an instance of a MOP with the above requirements.
    The \emph{generalized lower image} is defined as:
    \[\mathcal D \coloneqq \left\{ (w_1, \dots, w_{p-1}, \alpha ) \in \mathbb R^p : w_i \geq 0, \sum_{i=1}^{p-1} w_i \leq 1, y \in \mathcal Q^+:\alpha \leq \lambda(w)^\mathsf T y \right\}\]
\end{definition}

Observe that every such finitely generated polyhedron $\mathcal Q^+$ can be perceived as the upper image of some MOLP, e.g., the decision polyhedron is also $\mathcal Q^+$ and the objective matrix is the identity.
Thus, we can apply the geometric duality theory to general Edgeworth-Pareto hulls as defined above.
In particular, there exists an inclusion reversing one-to-one map from the $k$-dimensional faces of $\mathcal Q^+$ to the $(p-k-1)$-dimensional faces of $\mathcal D$.
For our purpose, the most important corollary is the following:
For every extreme point $(\widehat{w}_1,\dots, \widehat{w}_{p-1}, \widehat{\alpha})$ of $\mathcal D$, the hyperplane $\left\{y\in \mathbb R ^ p : \lambda(\widehat w)^\mathsf T y = \widehat\alpha\right\}$ is facet supporting to $\mathcal Q^+$.

\subsection{A first general separation oracle}
%
Let $w_i$ be the coefficient of objective $i, i=1, \ldots, p$ in the desired hyperplane $H$ and $\alpha$ be the right-hand-side of $H$.
\begin{align}
(\text{Sep-}y^*)\quad \min\ (y^*)^\mathsf T w -\alpha & \quad &  \notag \\
s.t.\ y^\mathsf T w -\alpha &\geq 0 & \forall y \in \mathcal Q \label{eq:feas} \tag{FEAS} \\
\sum_{i=1,\ldots, p} w_i &= 1 \label{eq:wsum1b} \tag{WSUM1}   \\
w_i &\geq 0 & i=1,\ldots, p \label{eq:wsum2} \tag{WSUM2} 
\end{align}

Problem $(\text{Sep-}y^*)$ encodes that $(w,\alpha)$ should induce a separating hyperplane by enforcing that all $y \in \mathcal Q$ should be on the positive side of the hyperplane using constraints \eqref{eq:feas}. Constraint \eqref{eq:wsum1b} is a normalisation constraint for the coefficients of the obtained hyperplane. It will be used in the proofs later on. If the objective function value of $(\text{Sep-}y^*)$ is negative for a given $y^*$, we get that $y^* \not \in \mathcal Q^+$ and $(w,\alpha)$ gives the coefficients of the corresponding separating hyperplane. To deal with the infinite size of $\mathcal Q$, we propose to solve $(\text{Sep-}y^*)$ using a cutting-plane approach, where constraints \eqref{eq:feas} get separated: Given a solution $(\widehat w,\widehat \alpha)$, there exists a violated constraint \eqref{eq:feas}, iff $\inf\left\{{\widehat w}^\mathsf T y : y \in \mathcal Q\right\} = \inf\left\{{\widehat w}^\mathsf T f(x) : x \in \mathcal X\right\} < \widehat \alpha$. Thus, the separation problem for \eqref{eq:feas} consists in solving a weighted-sum problem.

\begin{remark}
In our implementation, the weighted-sum problems are weighted-sum scalarizations of the given original mixed-integer problems. We thus obtain integer feasible solutions. Further implementation details are given in Section~ \ref{sec:impl}. 
\end{remark}

We note that the separation oracle ($\text{Sep-}y^*$) shares some similarity with the LPs used in local cut separation for single-objective mixed-integer linear programming \citep{applegate2001tsp,chvatal2013local}.



\ifx
\begin{align}
(\text{D-Sep-}y^*)\quad  \max {\mu} &\quad  & \notag \\
s.t. \sum_{i: \hat y_i \in \mathcal Q} \lambda_i \hat y_i  +\mu e  &\leq y^* \label{eq:dfeas} \tag{D-FEAS}  \\
 \sum_{i: \hat y_i \in \mathcal Q} \lambda_i &= 1 \label{eq:sum1} \tag{SUM1} \\
 \lambda_i &\geq 0 & i: \hat y_i \in \mathcal Q\label{eq:sum2}\tag{SUM2} 
\end{align}

\begin{remark}
Problem $(\text{Sep-}y^*)$ can be modified by replacing the normalisation constraint \eqref{eq:wsum1b} with constraints $w_i \leq 1$, $i=1,\ldots, p$. Let $\mu_i$ be the corresponding dual multiplier for these new constraints. The objective function in the dual then reads $\sum_{1\leq i \leq p} \mu_i$ and the constraints have a term $\sum_{1\leq i \leq p} \mu_i e_i$, where $e_i$ is the $i$-th unit vector, instead of $\mu e$.
\end{remark}
\fi

We now prove that this is an implementation of a general point separating oracle and that new separating hyperplanes are facet supporting.
\begin{lemma}\label{lemma:point-sep}
    Let $y^*\in\mathbb R^p$ and $(\widehat w,\widehat \alpha)$ be an optimal solution to Sep-$y^*$.
    We have $(y^*)^\mathsf T \widehat w - \widehat \alpha < 0$ iff $y^*\notin \mathcal Q^+$.
\end{lemma}
\begin{proof}
  If $(y^*)^\mathsf T \widehat w - \widehat \alpha < 0$, then $\widehat w^\mathsf T (y^*) < \widehat \alpha$ and $\widehat w^\mathsf T y \geq \widehat \alpha$ for all $y\in \mathcal Q^+$ by constraints (\ref{eq:feas}).
  In other words, $H^{\widehat w, \widehat \alpha} \coloneqq \left\{y\in \mathbb R^p : \widehat w^T y = \widehat \alpha\right\}$ separates $y^*$ from $\mathcal Q^+$ and thus $y^*\notin \mathcal Q$.
  If $y^*$ is not in $\mathcal Q^+$, then by the hyperplane separation theorem, there is a hyperplane $H^{\widehat w, \widehat \alpha}$, separating $y^*$ from $\mathcal Q^+$, i.e., $\widehat w^\mathsf T (y^*) < \alpha$ and $\widehat w^\mathsf T y > \alpha$ for all $y\in \mathcal Q^+$.
  Moreover, we can assume $\widehat w$ to be normalized, i.e., $\sum_{i=1}^p \widehat w_i = 1$.
  As $\text{rec}\, \mathcal Q^+ = \mathbb R^p_{\geq 0}$, we can also assume all $w_i$ to be at least $0$.
  Consequently, $(\widehat w, \widehat \alpha)$ is a feasible solution to Sep-$y^*$ and its value $(y^*)^\mathsf T \widehat w - \widehat \alpha$ is at most $0$ and an upper bound on the optimal value.\qedhere
\end{proof}
\begin{lemma}
    For every optimal extreme point solution $(\widehat w, \widehat \alpha)$ of Sep-$y^*$, we have \[H^{\widehat w, \widehat \alpha} \coloneqq \left\{y\in\mathbb R^p : \widehat w^\mathsf T y = \widehat \alpha\right\}\] is facet supporting for $\mathcal Q^+$.
\end{lemma}
\begin{proof}
By setting $\bar \alpha \coloneqq (y^*)^\mathsf T w - \alpha$ and using that $\text{rec}\, \mathcal Q^+ = \mathbb R^p_{\geq 0}$, we can rewrite the feasible set of Sep-$y^*$ as follows:
\[\left\{(w_1, \dots, w_{p}, \bar \alpha) : \forall y \in \mathcal Q^+: w^\mathsf T (y- y^*) + \bar \alpha\geq 0, w_i \geq 0, \sum_{i = 1}^{p} w_i = 1\right\}\]
As $w_p = 1 - \sum_{i=1}^{p-1}w_i$, finding the smallest $\bar \alpha$ in the above set is equivalent to finding the largest $\bar\alpha$ in
\[\left\{(w_1, \dots, w_{p-1}, -\bar \alpha) : \forall y \in \mathcal Q^+ - y^*: \lambda(w)^\mathsf T y \geq \bar \alpha, w_i \geq 0, \sum_{i = 1}^{p-1} w_i \leq 1\right\}\]
which is the generalized lower image of $\mathcal Q^+ - y^*$.
An optimal extreme point solution of Sep-$y^*$ is thus an extreme point of the corresponding generalized lower image.
The claim thus follows from the geometric duality theorem.\qedhere
\end{proof}

\begin{theorem}
    The above algorithm using (Sep-$y^*$) is a point separating oracle that provides us with facet supporting inequalities.
\end{theorem}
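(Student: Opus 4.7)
The plan is to prove two things separately: first, that the algorithm built on (Sep-$y^*$) correctly decides whether $y^*\in\mathcal{Q}^+$ and, in the outside case, returns a valid separating hyperplane; and second, that the produced hyperplane is facet-supporting for $\mathcal{Q}^+$.

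For correctness, I would split on the sign of the optimal value of (Sep-$y^*$). If it is strictly negative with optimum $(\widehat w,\widehat\alpha)$, the hyperplane $H=\{y:\widehat w^\mathsf{T} y=\widehat\alpha\}$ separates $y^*$ from $\mathcal{Q}^+$: the objective value gives $\widehat w^\mathsf{T} y^*<\widehat\alpha$; constraint \eqref{eq:feas} gives $\widehat w^\mathsf{T} y\geq\widehat\alpha$ for all $y\in\mathcal{Q}$; and since $\widehat w\geq 0$ by \eqref{eq:wsum2}, this extends to all of $\mathcal{Q}^+=\conv\mathcal{Q}+\mathbb{R}^p_\geq$ by convexity together with non-negativity. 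Conversely, if the optimum is non-negative then $y^*\in\mathcal{Q}^+$, for otherwise the hyperplane separation theorem furnishes a separator whose normal must be non-negative (else $\mathcal{Q}^+$ would be unbounded in a direction violating the separating inequality); after rescaling to satisfy \eqref{eq:wsum1} this yields a feasible solution to (Sep-$y^*$) of strictly negative value, contradicting optimality.

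For the facet-supporting claim, I would chain the pieces already established in the excerpt. Corollary \ref{thm:projection} gives $\proj_w(F^O)=\proj_w(F^N)$, and Csirmaz's characterization identifies the extreme points of this projected set with facet-supporting hyperplanes of $\mathcal{Q}^+$. It then remains to show that an optimal extreme point of (Sep-$y^*$) projects to an extreme point of $\proj_w(F^N)$. The decisive observation is that on $F^N$ the objective $(y^*)^\mathsf{T} w-\alpha$ is constant, equal to the optimum value $v^*$, so $\alpha=(y^*)^\mathsf{T} w-v^*$ throughout $F^N$; hence $\proj_w|_{F^N}$ is an affine bijection onto its image with inverse $w\mapsto(w,(y^*)^\mathsf{T} w-v^*)$, and such maps between polyhedra send extreme points to extreme points. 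Therefore any optimal extreme point of (Sep-$y^*$) yields an extreme $w$ in $\proj_w(F^O)$, and by Csirmaz a facet-supporting hyperplane of $\mathcal{Q}^+$.

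The step I expect to be the main obstacle is not any individual derivation but the rigorous invocation of Csirmaz's characterization: one must verify that it transfers verbatim to the implicit \MOLP formulation of $\mathcal{Q}^+$ arising from the \MOILP (where the inequality description of $\conv\mathcal{A}$ is not available explicitly), and handle possibly degenerate cases such as $\mathcal{Q}^+$ lying in a proper affine subspace of $\mathbb{R}^p$, in which the notion of facet must be interpreted within the affine hull. Aside from this bookkeeping, the proof assembles Lemma \ref{thm:redundancy}, Corollary \ref{thm:projection}, and the affine-bijection observation above.
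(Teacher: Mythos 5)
Your proof follows essentially the same route as the paper, which states this theorem as a direct consequence of Lemma~\ref{thm:redundancy}, the surjection proposition, Corollary~\ref{thm:projection} and Csirmaz's extremality characterization; your affine-bijection observation (on $F^N$ one has $\alpha=(y^*)^\mathsf{T}w-v^*$, so $\proj_w$ restricted to the optimal face is injective and carries extreme points to extreme points) correctly makes explicit the one step the paper leaves implicit, and your sign analysis of the optimum supplies the correctness half that the paper delegates to the equivalence with (Sep-Orig). The degeneracy you flag as the main obstacle does not actually arise: $\mathcal Q^+=\conv\mathcal Q+\mathbb R^p_\geq$ is full-dimensional whenever $\mathcal Q\neq\emptyset$, so facets are the ordinary $(p-1)$-dimensional faces and Csirmaz's characterization applies verbatim to the implicit \MOLP over $\conv\mathcal A$.
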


\subsection{A target cut-like separation oracle}
Another separation oracle can be defined as follows. This oracle follows the \emph{target-cut} paradigm introduced in \cite{buchheim2008local} for single-objective mixed-integer linear
programming. In the description below, we assume that all points encountered in the separation process are $\geq 1$ (component-wise). This is without loss of generality, as we can always add a large constant to all points. In Section \ref{sec:impl} we describe how we do this in our implementation. In the presentation of this oracle we start from the dual side and then proceed to the primal side. 
\begin{align}
(\text{D-TSep-}y^*)\quad \max \sum_{i: \hat y_i \in \mathcal Q} \lambda_i  & \quad & \notag \\
s.t. \sum_{i: \hat y_i \in \mathcal Q} \lambda_i \hat y_i &\leq y^* \label{eq:dtfeas} \tag{D-T-FEAS}  \\
 \lambda_i &\geq 0 & i: \hat y_i \in \mathcal Q\label{eq:tsum2}\tag{TSUM} 
\end{align}
Constraints \eqref{eq:dtfeas} impose that $y^*$ must be equal to a point, or dominated by a point which can be obtained by a linear combination of the points in $\mathcal Q$. Constraints \eqref{eq:tsum2} ensure that this linear combination only has positive coefficients. The key insight is that $y^*$ only belongs to $\mathcal Q^+$ iff the objective of $\text{(D-TSep-}y^*)$ is one or larger, see \cite{buchheim2008local} for more details.
\begin{align}
(\text{TSep-}y^*)\quad \min\ (y^*)^\mathsf T w& \quad &  \notag \\
s.t.\ y^\mathsf T w &\geq 1 & \forall y \in \mathcal Q \label{eq:tfeas} \tag{TFEAS} \\
w_i &\geq 0 & i=1,\ldots, p \label{eq:rwsum} \tag{TWSUM} 
\end{align}
Using the above ideas, we can implement a point separating oracle as follows.
If an optimal solution $\widehat w$ of (TSep-$y^*$) has an objective value smaller than $1$, ${\widehat w}^\mathsf T y \geq 1$ gives a separating hyperplane.
Otherwise, we return \emph{inside}. Similar to constraints \eqref{eq:feas} of $(\text{Sep-}y^*)$, we propose to solve $(\text{TSep-}y^*)$ using a cutting-plane approach, where constraints \eqref{eq:tfeas} are separated by solving weighted-sum problems.

The reason this algorithm is a point separating oracle is analogous to Lemma \ref{lemma:point-sep}:
We see that for any feasible solution $\widehat w$ with objective value $(y^*)^\mathsf T \widehat w < 1$, the hyperplane $\{x \in \mathbb R^p : w^\mathsf T x = 1\}$ again separates $y^*$ from $\mathcal Q$, as long as $\mathcal Q \subseteq \mathbb R^p_\geq$. Moreover, in case the minimum value is at least $1$, then there is no separating hyperplane, hence $y^*\in \mathcal Q^+$. 

To prove that an optimal extreme point solution corresponds in fact to a facet supporting inequality, we observe that the feasible set of TSep-$y^*$ is a polar dual polyhedron to $\mathcal Q^+$.
\begin{theorem}
    The above algorithm using (TSep-$y^*$) is a point separating oracle that provides us with facet supporting inequalities.
\end{theorem}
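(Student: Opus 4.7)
The plan is to split the claim into two parts: (a) the oracle correctly distinguishes $y^*\in\mathcal Q^+$ from $y^*\notin\mathcal Q^+$ and outputs a genuine separator in the latter case, and (b) every optimal extreme point solution $\widehat w$ produces a hyperplane $\{y:\widehat w^{\mathsf T}y=1\}$ that supports a facet of $\mathcal Q^+$, not merely a face.

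For part (a), I would first argue that since $\mathcal Q\subseteq\mathbb R^p_{>0}$ (the $\geq 1$ assumption) and any feasible $w\geq 0$ satisfies $w^{\mathsf T}y\geq 1$ for all $y\in\mathcal Q$, the same inequality holds for every point of $\conv\mathcal Q+\mathbb R^p_{\geq}=\mathcal Q^+$ because adding a nonnegative vector to $y$ can only increase $w^{\mathsf T}y$. Thus whenever the optimum satisfies $(y^*)^{\mathsf T}\widehat w<1$, the hyperplane $\{y:\widehat w^{\mathsf T}y=1\}$ is a valid supporting hyperplane of $\mathcal Q^+$ that strictly separates $y^*$. Conversely, if the optimum is $\geq 1$, then by the MOLP reformulation from Section~\ref{sec:notation}, (TSep-$y^*$) is a genuine LP and LP duality yields a feasible solution to (D-TSep-$y^*$) with $\sum\lambda_i\geq 1$ and $\sum\lambda_i\hat y_i\leq y^*$; rescaling $\lambda'_i=\lambda_i/\sum_j\lambda_j$ gives a convex combination with $\sum\lambda'_i\hat y_i\leq y^*/\sum_j\lambda_j\leq y^*$ (using $y^*\geq 0$ and $\sum_j\lambda_j\geq 1$), which places $y^*$ in $\conv\mathcal Q+\mathbb R^p_{\geq}=\mathcal Q^+$.

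For part (b), the key observation is that the feasible region
\[
W=\{w\in\mathbb R^p_{\geq}:w^{\mathsf T}y\geq 1 \text{ for all } y\in\mathcal Q\}
\]
equals the polar/blocker of $\mathcal Q^+$, i.e., $W=\{w\geq 0:w^{\mathsf T}y\geq 1 \text{ for all } y\in\mathcal Q^+\}$, by the same monotonicity used above. Because $\mathcal Q^+$ has recession cone exactly $\mathbb R^p_{\geq}$ and lies in $\mathbb R^p_{>0}$ (so every facet defining inequality can be normalized to $w^{\mathsf T}y\geq 1$ with $w\geq 0$), standard polar duality for up-sets gives a bijection between extreme points of $W$ and facet defining inequalities of $\mathcal Q^+$, where an extreme point $\widehat w\in W$ corresponds to the facet $\mathcal Q^+\cap\{y:\widehat w^{\mathsf T}y=1\}$. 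Since the algorithm returns an optimal extreme point solution $\widehat w$ of the LP (TSep-$y^*$), this $\widehat w$ is a vertex of $W$ and hence the separating hyperplane is facet supporting.

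The main obstacle is formalizing the polar duality statement cleanly: one has to check that every facet of $\mathcal Q^+$ is of the form $\{y:w^{\mathsf T}y=1\}$ with $w\in W$ (which uses both that $\mathcal Q^+$ is separated from the origin so $\alpha>0$ can be scaled to $1$, and that $w\geq 0$ follows from the recession cone being $\mathbb R^p_{\geq}$), and that the normal $\widehat w$ of such a facet is necessarily a vertex of $W$ because $p$ affinely independent points of $\mathcal Q^+$ lying on the facet furnish $p$ linearly independent active constraints of $W$ at $\widehat w$ (combined, where needed, with active nonnegativity constraints $w_i=0$). Once this correspondence is in place, the theorem follows immediately from part (a) together with the fact that the LP solver returns an extreme optimal point.
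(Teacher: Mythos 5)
Your proposal is correct and follows essentially the same route as the paper: part (a) matches the paper's observation that $w\geq 0$ together with the $\mathcal Q\subseteq\mathbb R^p_{\geq 1}$ normalization makes $\{y:\widehat w^{\mathsf T}y=1\}$ a valid separator (with the ``inside'' case handled via the dual (D-TSep-$y^*$), which the paper delegates to \cite{buchheim2008local}), and part (b) is exactly the paper's assertion that the feasible set of (TSep-$y^*$) is the polar/blocker of $\mathcal Q^+$, so extreme optimal solutions yield facet supporting inequalities. You simply spell out the blocker correspondence (normalization of facet inequalities, recession cone forcing $w\geq 0$, affinely independent facet points giving linearly independent active constraints) that the paper states without proof.
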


\subsection{Theoretical running time\label{sec:run-time}}

Let us first analyze the most general version of the OA algorithm.
Using the methodology of \cite{bokler18}, we observe the following corollary:
\begin{corollary}
    The \OA algorithm with the point separating oracles from Section \ref{sec:oa} produces the next facet in time $O\Big(k^{\lfloor\frac{p}{2}\rfloor} (T_O + k\log k)\Big)$, where $T_O$ is the running time of the oracle and $k\in\mathbb N$ is the number of facets already computed.
\end{corollary}

Both our new point separating oracles show that a point separating oracle can be implemented by solving a sequence of weighted-sum scalarizations.
The major advantage of solving weighted-sum scalarizations over other known methods to implement the oracle model is that no new constraints are added and thus the structural properties of the feasible sets remain untouched.
It remains to show, however, that the number of weighted-sum scalarizations that need to be solved do not grow too fast.
The problems Sep-$y^*$ and TSep-$y^*$ are linear programming problems and thus only weakly polynomial-time algorithms are known to solve them.
Hence, we need to make some assumptions about the encoding length of their input, while we first avoid to restrict the running time of the weighted-sum oracle.
\begin{lemma}\label{lemma:poly-weighted}
    An optimal extreme point of (Sep-$y^*$) and (TSep-$y^*$) can be computed with a number of weighted-sum calls that grows polynomially in the encoding length of the extreme points of $\mathcal Q^+$ if an upper bound $\varphi \in \mathbb N$ on the maximum of these encoding lengths is available.
\end{lemma}
\begin{proof}
    Using the methodology of \cite{grotschel1993}, we compute an optimal extreme point solution to the LPs Sep-$y^*$ or TSep-$y^*$ in polynomial time by using the ellipsoid method and employing a rounding mechanism to reach an optimal extreme point solution.
    The ellipsoid method has a running time polynomial in the number of variables ($p+1$) and the encoding length of extreme points of the input polyhedron, i.e., the feasible set of Sep-$y^*$ or TSep-$y^*$.
    The encoding length of the extreme points of the feasible sets of Sep-$y^*$ and TSep-$y^*$ are polynomially bounded in the encoding length of their facets.
    The facets of Sep-$y^*$ and TSep-$y^*$, in turn, correspond to the extreme points of $\mathcal Q^+$.
    The bound on the encoding length $\varphi$ is therefore necessary for the ellipsoid method to bound the volume of the feasible set Sep-$y^*$ and TSep-$y^*$ and thus the number of its iterations.
    
    The separation oracle in the invokation of the ellipsoid method is implemented by a solver for the weighted-sum problem.
    The weighted-sum solver is hence called only polynomially many times.\qedhere
\end{proof}

For a large problem class the extreme points of $\mathcal Q^+$ can be bounded by a polynomial in the input size, namely MOMILPs:
It is well-known that the encoding length of every extreme point of the integer hull is bounded by a polynomial in the input size.
The encoding length of the image of each such feasible extreme point under the objective function is thus also polynomially bounded.
In fact, the objective functions do not need to be linear; polynomial computability in extreme points of the integer hull suffices.
In all these cases, $\varphi$ in the above lemma can be estimated using these bounds.

\begin{corollary}\label{lemma:momilp-poly-weighted}
    For MOMILP, an optimal extreme point of (Sep-$y^*$) and (TSep-$y^*$) can be computed with a polynomial number of weighted-sum calls.
\end{corollary}

We now proceed and add the restriction of a polynomial-time solvable weighted-sum problem.
In this case, the whole point separation can be performed in polynomial time for any MOMILP and the following theorem follows from the above.
\begin{theorem}
    If the weighted-sum problem for a given MOMILP is polynomial time solvable, then the facets of $Q^+$ can be computed with incremental-polynomial delay.
\end{theorem}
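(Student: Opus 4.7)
The plan is to compose two ingredients. The first is the corollary that immediately precedes the theorem: it already asserts that, given a facet-producing point separating oracle with running time $T_O$, the \OA algorithm computes each new facet in time $\mathcal O\bigl(k^{\lfloor d/2\rfloor}(T_O + k\log k)\bigr)$ after $k$ facets have been produced. Thus it suffices to show that, under the hypothesis that the weighted-sum problem for the given \MOILP is solvable in polynomial time, the oracle (Sep-$y^*$) (or equivalently (TSep-$y^*$)) itself runs in polynomial time. The theorem then follows because for each fixed number $p$ of objectives, the product $k^{\lfloor p/2\rfloor}(T_O + k\log k)$ is a polynomial in $k$ and the input size.

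To bound the oracle time, I would invoke the Grötschel–Lovász–Schrijver equivalence between separation and optimization. The LP (Sep-$y^*$) has only $p+1$ variables and the normalization and non-negativity constraints \eqref{eq:wsum1}, \eqref{eq:wsum2}, together with the family \eqref{eq:feas} indexed by the (possibly exponentially many) points of $\mathcal Q$. The separation problem for \eqref{eq:feas} at a candidate $(\widehat w,\widehat\alpha)$ is, by the paper's own observation, precisely the weighted-sum scalarization $\min\{\widehat w^\mathsf T y : y\in\mathcal Q\}$: the constraint is satisfied iff the optimum is $\geq \widehat\alpha$, and any optimal $y$ whose value is strictly smaller produces a most-violated cut. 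Hence, by assumption, separation runs in polynomial time, so the ellipsoid (or a suitable cutting-plane) method computes an optimal vertex solution of (Sep-$y^*$) in polynomial time. Combined with the fact, proved in the previous section, that any optimal extreme-point solution of (Sep-$y^*$) yields a facet-supporting hyperplane of $\mathcal Q^+$, this turns (Sep-$y^*$) into a polynomial-time facet-producing point separating oracle.

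The step that requires the most care—and which I would expect to be the main obstacle—is the bookkeeping of encoding lengths, because the output-sensitive framework of \cite{bokler18} and \cite{bokler2015output} is sensitive to bit-complexity, not just oracle call counts. Specifically, one must check that: (a) the points $\widehat y\in\mathcal Q$ returned by the weighted-sum oracle have encoding length polynomial in the input (which follows from boundedness of $\mathcal A$ and the standard bound on vertices of rational polyhedra); (b) the vertex $(\widehat w,\widehat\alpha)$ of (Sep-$y^*$) returned by the ellipsoid method has polynomially bounded encoding (standard, since (Sep-$y^*$) has constantly many variables and the separated constraints have polynomial size); and (c) consequently the facet inequality fed into the double description of $\mathcal S_i$ has polynomial encoding, so that the per-step cost of updating the double description stays within the $\mathcal O(k^{\lfloor p/2\rfloor} k\log k)$ bound of the corollary. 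Once these size bounds are in place, substituting the resulting polynomial $T_O$ into the corollary yields incremental polynomial time for the enumeration of the facets of $\mathcal Q^+$, as claimed.
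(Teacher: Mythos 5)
Your proposal follows essentially the same route as the paper, which proves this theorem simply by combining the preceding corollary (incremental polynomial time given a facet-producing oracle of running time $T_O$) with the observation that (Sep-$y^*$) and (TSep-$y^*$) interact with the \MOILP only through weighted-sum scalarizations, in analogy to the inner-approximation result of \cite{bokler2015output}. Your elaboration via the Gr\"otschel--Lov\'asz--Schrijver equivalence and the encoding-length bookkeeping makes explicit exactly the machinery the paper leaves implicit, so this is a correct and faithful reconstruction rather than a different argument.
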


This theorem complements the work by \cite{bokler2015output}, where the same result was shown for linear combinatorial optimization problems and extreme point of $\mathcal Q^+$.
It is easy to see, that the result can also be applied to MOMILPs.

\section{Computational experiments \label{sec:comp}}
%

The proposed \OA\ algorithms are implemented in Python using CPLEX 12.10 as LP/mixed integer (non-)linear programming-solver and the \texttt{parma polyhedral library (ppl)}\sloppy
\citep{bagnara2008parma} for vertex enumeration. In this section, we first describe the instances used in our computational experiments in Section \ref{sec:inst}, followed by a discussion of implementation details in Section \ref{sec:impl} and results in Section \ref{sec:performance}.


\newcommand{\MAPinst}{\texttt{AP-MOO}\xspace}
\newcommand{\MKPinst}{\texttt{KP-MOO}\xspace}
\newcommand{\QCPinst}{\texttt{QCP-MOO}\xspace}
\newcommand{\IPinst}{\texttt{IP-MOO}\xspace}
\newcommand{\MIPinst}{\texttt{MIP-MOO}\xspace}
\newcommand{\MIPperinst}{\texttt{MIP-PER}\xspace}
\newcommand{\moolibrary}{\texttt{moolibrary}\xspace}

\subsection{Instances\label{sec:inst}}

We use instance sets from the literature and also new non-linear instances. Our newly created instances are available online under \url{https://msinnl.github.io/pages/multi-objective.html}. The sets are as follows.

\begin{itemize}
\item \MAPinst: These are multi-objective assignment instances from the multi-objective optimization library (\moolibrary)\footnote{available at \url{http://home.ku.edu.tr/~moolibrary/}}, with three objectives. The number of agents and tasks is identical and ranges from 5 to 40, in increments of 5. The objective function coefficients are random integers in the range $[1,20]$. There are 100 instances in the set. They were proposed in \cite{kirlik2014}; similar instances were also used in experiments for \IA algorithms in \citep{ozpeynirci2010exact,przybylski2010recursive,przybylski2019simple}.
\item \MKPinst: These are multi-objective knapsack instances from the \moolibrary with three to five objectives. Both profits and weights are random integers from the interval $[1,1000]$. The budget is calculated as half the total weight of all items, rounded up to the next integer. There are 160 instances in the set: 100 instances with three objectives (10 to 100 items), 40 with four (10 to 40 items), and 20 with five (10 to 20 items). Like the set \MAPinst, these instances are proposed in \cite{kirlik2014} and similar instances are also used in experiments for \IA algorithms in \citep{ozpeynirci2010exact,przybylski2010recursive,przybylski2019simple}.

\item \QCPinst: These are three-objective quadratic covering instances newly created for this work. The problem is defined as follows:

\begin{align*}
\min &(x^\mathsf T R^1 x,x^\mathsf T R^3 x,x^\mathsf T R^3 x) \\
 w^\mathsf T x &\geq C \\
x &\in \{0,1\}^n
\end{align*}

where $R^j=V_j^{\mathsf T} V_j \in \mathbb Z^{n \times n}$ for $j \in \{1,2,3\}$, $w \in \mathbb Z^n$ and $C\in \mathbb Z$. We have created ten instances for each $n \in \{10,20,30,40\}$. The entries of $V_j$ are random integers from the interval $[1,10]$ and the entries of $w$ are random integer from the interval $[1,100]$. The value of $C$ is set to $\lfloor w^\mathsf T x/4 \rfloor$. The
structure of the problem allows an easy linearization of the non-linear objective
function using a standard McCormick-linearization to transform the problem
into an \MOMILP. This allows us to analyze potential benefits of solving the problem directly as a non-linear multi-objective problem against solving the linearized version of it.



\end{itemize}



\subsection{Implementation details\label{sec:impl}}

\paragraph{Instance transformation and offset calculation.}
We first transform the instances to minimization form by flipping the objective coefficients for all objectives which are maximization. We then compute $y^I$ by solving $\min_{y \in \mathcal Q} e_i^T y$ for unit-vector $e_i$. If the value of $y^I$ is negative for an objective $j$, we add the constant $-y^I_j+1$ to the respective objective to ensure that all points encountered in the separation process are component-wise $\geq 1$ as needed for (TSep-$y^*$). Moreover, instead of using the value of one on the right-hand-side of constraints \eqref{eq:rwsum}, we fix the value to the sum of the components of the ideal point (after adding the constant $-y^I_j+1$ to objectives $j$ with negative value). This proved to be more numerically stable in preliminary computations.

\paragraph{Separation.} For the initial approximation $\mathcal S$, the ideal point $y^I$ can be obtained by solving $\min_{y \in \mathcal Q} e_i^T y$ for unit-vector $e_i$. Moreover, we initialize the separation LPs by adding constraints \eqref{eq:feas}, resp., \eqref{eq:tfeas} induced by the solutions obtained for the $p$ problems used for calculating the ideal point. When separating constraints \eqref{eq:feas}, resp., \eqref{eq:tfeas}, we use tolerance $\varepsilon$ for checking violation. Once a constraint \eqref{eq:feas}, resp., \eqref{eq:tfeas} is added to the separation LP, we leave it there for the remainder of the algorithm (i.e., for all subsequent separation oracle calls). In the separation LPs, the \texttt{numerical emphasis} parameter of CPLEX is turned on, and the feasiblity and optimality tolerances are set to $1\text{e-}9$, i.e., the most accurate value possible. When checking the result of the separation oracle (i.e., the objective value of the separation LP against zero, resp., one), we also use tolerance $\varepsilon$. 

We use \texttt{ppl} with integer numbers as input, as using fractional numbers leads to (more) numerical instabilities. In order to do so, we scale each $(w,\alpha)$ obtained from the separation oracle by $10^9$ and take the integer part of the obtained number. When checking if a point $y^* \in vertices(\mathcal S_i)$ is in $\mathcal Q^+$, we proceed as follows: When a point $y^*$ is within tolerance value $\varepsilon$ to a point in $y' \in insideVertices$ for each coordinate, we consider $y^*=y'$ and do not call the separation oracle for it. For each $\mathcal S_i$, we check the points in the order in which they are made available by \texttt{ppl}. Moreover, when updating the outer approximations, we use a slightly different strategy compared to the outline in Algorithm \ref{alg:outer}: We do not immediately calculate $\mathcal S_i \cap H$ and move to $\mathcal S_{i+1}$ once we discovered a $y^* \not \in insideVertices$; instead, we check all $y^* \in vertices(\mathcal S_i)$, collect the obtained separating hyperplanes, and then add them all at the same time to obtain the next outer approximation. This approach turned out to be faster and more numerically stable in preliminary computations. Finally, we also add every point found when separating \eqref{eq:feas}, resp., \eqref{eq:tfeas} to $insideVertices$, as these points are obtained from the solution of a weighted-sum problem, and thus are supported non-dominated points.

\subsection{Results\label{sec:performance}}

\paragraph{Numerical accuracy of our approach and the influence of $\varepsilon$.}

The focus in the implementation of our algorithms is to find a good balance between numerical accuracy and speed. We note that \IA algorithms can suffer from numerical issues, for example in \cite{przybylski2019simple} different versions of the same \IA algorithm are tested,
and some versions cannot find the complete set of non-dominated extremes points for some instances. In our computational study, we use the open-source \IA solver \PolySCIP \citep{borndorfer2016polyscip,maher2017scip} for comparison. As experiments show (see below), there are some instances were \PolySCIP does not find all extreme points; moreover, we observed in \PolySCIP's output that in some cases it also reports some weakly-dominated points as part of the set of non-dominated extremes points. In \OA, numerical issues can lead to missing cut-off points, which do not belong to $\mathcal Q^+$. However, even if we miss some of these points, the obtained solution is still a valid outer approximation. 


To investigate the influence of different values of $\varepsilon$ on the performance and accuracy of our approach, we first consider the instance set \MAPinst. Since the assignment problem has a totally unimodular constraint matrix, it can be solved as a linear programming problem and thus we can obtain the extreme points of $\mathcal Q^+$ by using the multi-objective LP solver \bensolve. Table \ref{ta:ap} shows the number of extreme points of $\mathcal Q^+$ obtained by our \OA with both oracles (columns $|\tilde{\mathcal Q}^+|$), and the number of extreme points when these points are rounded to the next integer (columns $|\mathcal Q^+|$, recall that \MAPinst has integer coefficients, thus all extreme points have integer values). We considered $\varepsilon=1e-3$ and $\varepsilon=1e-5$ for both oracles. We also report the number of extreme points obtained by \bensolve and \PolySCIP and the run time of the methods (column $t[s]$). The results are aggregated by instance size. To allow for a meaningful comparison, we only report results for instance sizes were all approaches terminate within the time limit for all instances of this size.

The results show that for instances with size up to 30, all methods give a consistent number of extreme points. For instances of size 35, the size of $|\mathcal Q^+|$ obtained by the \OA is consistent with \bensolve considering both oracles when $\varepsilon=1e-5$. For instances of size 40, the number of extreme points obtained by the \OA and by \bensolve becomes inconsistent, but only by a small amount. This is not surprising as with larger instances size, the size of $|\mathcal Q^+|$ also grows, and thus the potential for numerical inaccuracies increases as well. We observe that the number of non-rounded extreme points ($|\tilde{\mathcal Q}^+|$) also stays fairly consistent for smaller-sized instances, and for all sizes is about three times as large as the number of points after rounding. Regarding the run time, using $\varepsilon=1e-3$ is about twice as fast as using $\varepsilon=1e-5$.

\begin{landscape}
\begin{table}[ht]
\centering
\caption{Results for instances \MAPinst aggregated by size \label{ta:ap}} 
\begin{tabular}{l|rrr|rrr|rrr|rrr|rr|rr}
  \toprule
  size & \multicolumn{6}{|c|}{Sep-$y^*$} &\multicolumn{6}{c}{TSep-$y^*$}  &\multicolumn{2}{c}{\texttt{bensolve}}  &\multicolumn{2}{c}{\texttt{PolySCIP}}    \\  & \multicolumn{3}{c|}{$\varepsilon=1e-3$} & \multicolumn{3}{c|}{$\varepsilon=1e-5$} & \multicolumn{3}{c|}{$\varepsilon=1e-3$} & \multicolumn{3}{c|}{$\varepsilon=1e-5$}  & & \\ 
  & $|\tilde{\mathcal Q}^+|$ & $|\mathcal Q^+|$ & t [s] & $|\tilde{\mathcal Q}^+|$ & $|\mathcal Q^+|$ & t [s] & $|\tilde{\mathcal Q}^+|$ & $|\mathcal Q^+|$ & t [s] & $|\tilde{\mathcal Q}^+|$ & $|\mathcal Q^+|$ & t [s] & $|\mathcal Q^+|$& t [s] & $|\mathcal Q^+|$& t [s] \\  \midrule
5 & 19.0 & 7.5 & 0.7 & 19.0 & 7.5 & 0.6 & 18.7 & 7.5 & 0.1 & 18.7 & 7.5 & 0.1 & 7.5 & 0.0 & 7.5 & 1.0 \\ 
  10 & 117.4 & 39.0 & 2.0 & 117.4 & 39.0 & 1.8 & 117.4 & 39.0 & 0.8 & 117.4 & 39.0 & 0.9 & 39.0 & 0.5 & 38.1 & 2.3 \\  
  15 & 258.7 & 83.1 & 3.5 & 258.7 & 83.1 & 3.9 & 258.7 & 83.1 & 3.0 & 258.7 & 83.1 & 3.6 & 83.1 & 0.4 & 78.5 & 6.3 \\  
  20 & 495.3 & 161.3 & 10.8 & 495.3 & 161.3 & 12.6 & 495.3 & 161.3 & 10.2 & 495.3 & 161.3 & 13.6 & 161.3 & 0.4 & 153.5 & 11.1 \\ 
  25 & 809.1 & 253.1 & 26.9 & 809.1 & 253.1 & 36.6 & 808.7 & 253.1 & 27.2 & 809.1 & 253.1 & 39.2 & 253.1 & 0.8 & 223.3 & 17.6 \\ 
  30 & 1183.0 & 379.4 & 62.6 & 1183.2 & 379.4 & 95.7 & 1182.4 & 379.5 & 63.1 & 1183.2 & 379.4 & 101.6 & 379.4 & 1.7 & 348.1 & 35.5 \\ 
  35 & 1579.7 & 501.2 & 121.7 & 1581.1 & 501.4 & 202.0 & 1576.9 & 500.9 & 123.1 & 1581.1 & 501.4 & 213.3 & 501.4 & 3.2 & 458.0 & 65.9 \\ 
  40 & 2170.0 & 698.9 & 244.8 & 2160.4 & 700.8 & 437.2 & 2167.8 & 698.4 & 247.5 & 2164.6 & 700.5 & 451.9 & 699.1 & 5.8 & 631.9 & 122.2 \\  
   \bottomrule
\end{tabular}
\end{table}
\end{landscape}

\paragraph{Results for instance set \MKPinst.}

Table \ref{ta:kp} shows the results for instance set \MKPinst
obtained by our algorithms and by \PolySCIP. Our algorithms are run with $\varepsilon=1e-5$. In these tables we give the number of rounded extreme points (columns $|\mathcal Q^+|$), the number of facets (columns \#fac) and the runtime (columns t [s]). We see that the number of points is quite similar for all three methods, however, as the instances become larger for $p=3$ and for $p=4,5$ in general, the numbers differ. This is not unexpected, as both larger instances and more objectives could have a negative effect on numerical accuracy. In particular, as for larger objectives, the number of facets is growing considerably. We observe that for the number of facets, the difference is larger compared to the number of points. This could be explained by the fact that $|\mathcal Q^+|$ is the rounded set of points, which exploits that the instance coefficients are integer. Thus, numerical inaccuracies are corrected by the rounding, e.g., multiple slightly fractional points are likely to be rounded to the same integer point. The runtime-performance of both separation oracels is quite similar.

\begin{table}[h!tb]
\centering
\caption{Results for instances \MKPinst with $p=3,4,5$ aggregated by number of items \label{ta:kp}} 
\begin{tabular}{lrrr|rrr|rr}
  \toprule
  size & \multicolumn{3}{c|}{Sep-$y^*$} &\multicolumn{3}{c|}{TSep-$y^*$} &\multicolumn{2}{c}{\texttt{PolySCIP}}   \\   & \#fac & $|\mathcal Q^+|$  & t [s] & \#fac  & $| \mathcal Q^+|$
  & t [s]  & $|\mathcal Q^+|$  & t [s]  \\  \midrule
  10 & 11.1 & 5.0 & 0.2 & 11.1 & 5.0 & 0.1 & 5.0 & 7.8 \\ 
  20 & 30.2 & 14.1 & 0.4 & 29.9 & 14.1 & 0.3 & 14.1 & 4.4 \\ 
  30 & 56.4 & 26.2 & 0.9 & 56.6 & 26.2 & 0.8 & 26.1 & 3.5 \\ 
  40 & 77.6 & 36.3 & 1.5 & 78.0 & 36.3 & 1.4 & 35.9 & 3.6 \\ 
  50 & 105.8 & 48.4 & 2.2 & 106.1 & 48.6 & 2.0 & 48.7 & 2.5 \\ 
  60 & 150.9 & 69.5 & 3.7 & 150.9 & 69.5 & 3.2 & 69.7 & 3.0 \\ 
  70 & 201.4 & 92.5 & 5.7 & 203.2 & 92.7 & 5.0 & 92.5 & 2.4 \\ 
  80 & 252.4 & 114.4 & 8.1 & 249.7 & 114.5 & 6.8 & 114.3 & 3.1 \\ 
  90 & 311.2 & 141.9 & 11.4 & 309.7 & 141.6 & 9.1 & 141.8 & 3.5 \\ 
  100 & 388.9 & 177.7 & 16.2 & 391.6 & 177.7 & 12.9 & 176.7 & 7.4 \\ 
  \midrule
10 & 27.2 & 6.6 & 0.2 & 26.5 & 6.6 & 0.2 & 6.6 & 0.7 \\ 
  20 & 169.7 & 30.2 & 2.2 & 170.0 & 30.2 & 2.0 & 30.0 & 1.1 \\ 
  30 & 378.7 & 61.2 & 7.2 & 375.6 & 61.1 & 5.9 & 60.2 & 1.4 \\ 
  40 & 908.3 & 136.7 & 25.5 & 915.1 & 136.9 & 19.2 & 133.5 & 2.5 \\ 
  \midrule
  10 & 108.2 & 10.2 & 0.9 & 109.3 & 10.1 & 0.9 & 10.0 & 0.5 \\ 
  20 & 661.4 &38.9 & 15.1 & 650.7 & 38.9 & 11.6 & 38.1 & 1.0 \\ 
   \bottomrule
\end{tabular}
\end{table}

\paragraph{Results for instance set \QCPinst.}

Table \ref{ta:qcp} shows the results for instance set \QCPinst
obtained by our algorithms in both the variant where the problem is solved directly as a multi-objective mixed-integer non-linear programming problem, and also the variant where the problem is linearized, i.e., where a \MOMILP gets solved. This means that the black-box solver for the weighted-sum problems must either solve mixed integer non-linear programming problems or mixed integer linear programming problems. We note that CPLEX is capable of both. We report the same values as in Table \ref{ta:kp} and additionally report the number of instances solved out of ten for each item-size (columns \#sol). We see that for these instances, the oracle Sep-$y^*$ seems more efficient, as both in the non-linear as well as in the linearized case two more instances can be solved. By looking at the item-sizes where all instances can be solved (i.e., 10 and 20) the numerical accuracy does not seem to be affected by the type of black-box solver used. On the other hand, the results show directly solving the non-linear programming variant of the problem instead of the linearized is generally faster. This means it could pay off that our proposed algorithm is capable of directly solving multi-objective mixed-integer non-linear programming problems without the need for linearization.

\begin{landscape}

\begin{table}[ht]
\centering
\caption{Results for instances \QCPinst aggregated by number of items \label{ta:qcp}} 
\begin{tabular}{lrrrr|rrrr|rrrr|rrrr}
  \toprule
  & \multicolumn{8}{c|}{non-linear} &\multicolumn{8}{c}{linearized} \\
  size & \multicolumn{4}{c|}{Sep-$y^*$} &\multicolumn{4}{c|}{TSep-$y^*$} & \multicolumn{4}{c|}{Sep-$y^*$} &\multicolumn{4}{c}{TSep-$y^*$}   \\   & \#fac & $|\mathcal{Q^+}|$ & t [s]  & \#sol & \#fac  & $|\mathcal{Q^+}|$ & t [s]  & \#sol & \#fac  & $|\mathcal{Q^+}|$ & t [s]  & \#sol& \#fac  & $|\mathcal{Q^+}|$& t [s]  & \#sol  \\  \midrule
10 & 8.0 & 3.9 & 0.2 & 10 & 8.0 & 3.9 & 0.2 & 10 & 8.0 & 3.9 & 0.2 & 10 & 8.0 & 3.9 & 0.2 & 10 \\ 
  20 & 14.3 & 5.7 & 0.7 & 10 & 14.3 & 5.7 & 0.7 & 10 & 14.3 & 5.7 & 8.8 & 10 & 14.3 & 5.7 & 8.6 & 10 \\ 
  30 & 20.7 & 8.0 & 122.0 & 8 & 21.0 & 8.1 & 181.5 & 7 & 21.0 & 8.1 & 154.3 & 8 & 21.0 & 8.1 & 207.1 & 7 \\ 
  40 & 42.3 & 14.0 & 480.3 & 2 & 42.3 & 14.0 & 540.2 & 1 & 41.2 & 14.2 & 488.2 & 2 & 41.9 & 14.1 & 543.6 & 1 \\ 
   \bottomrule
\end{tabular}
\end{table}
\end{landscape}



\section{Conclusion}
\label{sec:concl}
We present the first outer-approximation algorithm to compute the extreme points and facets for multi-objective mixed-integer linear programming problems and also certain multi-objective mixed-integer non-linear programming problems.
We show that the number of weighted-sum mixed-integer oracles needed to compute these extreme points and facets can be bounded polynomially by the size of the output and the input for \MOMILP.
And for \MOMILP with polynomial-time computable weighted-sum scalarizations, the facets of the Edgeworth-Pareto hull can be computed with incremental polynomial delay.
We provide a computational study on instances from literature and new non-linear instances. In this study, we investigate the numerical accuracy of our method and also include a comparison with \PolySCIP, which is an inner approximation algorithm for multi-objective integer linear programming problems. 
We conjecture that existing and future multi-objective B\&B methods can benefit from partial bound set computation which now is possible with our proposed method. Future work should investigate the efficacy of this approach in practical B\&B algorithms.
\newline
\newline
\textbf{Data availability} The datasets generated during and/or analyzed during the current study are available in the web pages \url{http://home.ku.edu.tr/~moolibrary/} and \url{https://msinnl.github.io/pages/multi-objective.html}.

\ifArXiV
\section*{Acknowledgments}{
This research was funded in whole, or in part, by the Austrian Science Fund (FWF) [P 31366, 35160-N]. The research was also supported by the Linz Institute of Technology (Project LIT-2019-7-YOU-211) and the JKU Business School.
For the purpose of open access, the author has applied a CC BY public copyright licence to any Author Accepted Manuscript version arising from this submission.
}
\fi

\bibliographystyle{spbasic} 
\bibliography{oa.bib} 

\end{document}